\documentclass[12pt]{article}
\usepackage{amsmath,amssymb,amsfonts,amsthm}
\setlength{\parindent}{18pt}
\textwidth14cm
\textheight21.6cm
\newcounter{item}[section]
\newcounter{kirshr}
\newcounter{kirsha}
\newcounter{kirshb}
\newenvironment{enumroman}{\setcounter{kirshr}{1}
\begin{list}{(\roman{kirshr})}{\usecounter{kirshr}} }{\end{list}}
\newenvironment{enumarab}{\setcounter{kirshb}{1}
\begin{list}{(\arabic{kirshb})}{\usecounter{kirshb}} }{\end{list}}
\newtheorem{theorem}{Theorem}[section]

\newtheorem{corollary}[theorem]{Corollary}
\newenvironment{demo}[1]{\noindent{\bf #1.}\upshape\mdseries}
{\nopagebreak{\hfill\rule{2mm}{2mm}\nopagebreak}\par\normalfont}
\theoremstyle{definition}

\newtheorem{example}[theorem]{Example}
\newtheorem{definition}[theorem]{Definition}
\def\R{\mathbb{R}}

\def\C{{\mathfrak{C}}}

\def\Nr{{\mathfrak{Nr}}}

\def\RCA{{\sf RCA}}

\def\A{{\mathfrak{A}}}
\def\B{{\mathfrak{B}}}
\def\C{{\mathfrak{C}}}

\def\M{{\mathfrak{M}}}
\def\N{{\mathfrak{N}}}

\def\CA{{\sf CA}}

\def\Uf{{\sf Uf}}

\def\RCA{{\sf RCA}}

\def\(R)RA{{\bf (R)RA}}
\def\RA{{\bf RA}}
\def\RRA{{\bf RRA}}

\def\R{\mathbb{R}}

 \def\CA{{\sf CA}}
\def\B{{\sf B}}
\def\G{{\sf G}}

 \def\Cm{{\mathfrak{Cm}}}
\def\Nr{{\mathfrak{Nr}}}

\def\Ra{{\mathfrak{Ra}}}

\def\Ra{{\mathfrak{Ra}}}
\def\Nr{{\mathfrak{Nr}}}
\def\Tm{{\mathfrak{Tm}}}
\def\A{{\mathfrak{A}}}
\def\B{{\mathfrak{B}}}
\def\C{{\mathfrak{C}}}

\def\A{{\mathfrak{A}}}
\def\B{{\mathfrak{B}}}
\def\C{{\mathfrak{C}}}

\def\Bb{{\mathfrak{Bb}}}

\def\Bb{{\mathfrak{Bb}}}

\def\CA{{\sf CA}}
\def\RA{{\sf RA}}
\def\RRA{{\sf RRA}}
\def\RCA{{\sf RCA}}

\def\G{{\bf G}}

\def\At{{\sf At}}

\def\Ra{{\sf Ra}}
\def\R{{\sf R}}


\def\Cof{{\sf Cof}}

\title{Blow up and Blur constructions in algebraic logic}
\author{Tarek Sayed Ahmed}
\begin{document}
\maketitle

\begin{abstract} We give a simpler proof of a result of Hodkinson in the context of a blow and blur up construction
argueing that the idea at heart is similar to that adpted by Andr\'eka et all \cite{sayed}
The idea is to blow up a finite structure, replacing each 'colour or atom' by infinitely many, using blurs
to  represent the resulting term algebra, but the blurs are not enough to blur the structure of the finite structure in the complex algebra. 
Then, the latter cannot be representable due to a {\it finite- infinite} contradiction. 
This structure can be a finite clique in a graph or a finite relation algebra or a finite 
cylindric algebra. This theme gives example of weakly representable atom structures tthat are not strongly
representable. This is the essence too of construction of Monk like-algebras, one constructs graphs with finite colouring (finitely many blurs),
converging to one with infinitely many, so that the original algebra is also blurred at the complex algebra level, 
and the term algebra is completey representable, yielding a representation of its completion the 
complex algebra. 

A reverse of this process exists in the literature, it builds algebras with infinite blurs converging to one with finite blurs. This idea due to 
Hirsch and Hodkinson, uses probabilistic methods of Erdos to construct a  sequence of graphs with infinite  chromatic 
number one that is $2$ colourable. This construction, which works for both relation and cylindric algebras,
further shows that the class of strongly representable atom structures
is not elementary.
\end{abstract}


\section{Introduction}

The idea is to blow up a finite structure, replacing each 'colour or atom' by infinitely many, using blurs
to  represent the resulting term algebra, but the blurs are not enough to blur the structure of the finite structure in the complex algebra. 
Then, the latter cannot be representable due to a {\it finite- infinite} contradiction. 
This structure can be a finite clique in a graph or a finite relation algebra or a finite 
cylindric algebra.

We discuss the possibility of obtaining stronger results concerning completions, 
for example we approach the problem as to whether classes of subneat reducts are 
closed under completions, and analogous results 
for infinite dimensions.  Partial results in this direction are obtained by Sayed Ahmed, some of which will be 
mentioned below.

The main idea is to {\it split and blur}. Split  what? You can split a clique by taking $\omega$ many disjoint copies of it, 
you can split  a finite relation algebra, 
by splitting each atom into $\omega$ many, you can split a finite cylindric algebra. Generally, the splitting has to do with blowing up a finite structure
into infinitely many.

Then blur what? On this split one adds a subset of  a set of fixed in advance blurs, usually finite,
and then define an infinite atom structure, induced by the properties of the finite structure he originally started with.
It is not this atom structure that is blurred but rather the original finite structure.
This means that the term algebra built on this new atom structure, 
that is the algebra generated by the atoms,
coincides with a carefully chosen partition of the set of atoms obtained after splittig and bluring
up to minimal  deviations, so the original finite relation algebra is blurred to the extent that is invisible on this level.

The term algebra will be representable, using all such blurs as colours,
But the original algebra structure re-appears in the completion of this term algebra, that is the complex algebra of the atom structure,
forcing it to be
non representable, due to a finite-infinite discrepancy. However, if the blurs are infinite, then, they will blur also the structure
of the small algebra in the complex algebra, and the latter will be representable, inducing a complete representation of 
the term algebra.

\section{Main definition and examples}

We start by giving rigouous definitions of blowing up and bluring a finite structure.
In what follows, by an atom structure, we mean an atom structure of any class of completely additive Boolean algebras.

Let $N$ be a graph, in our subsequent investigations $N$ will be finite. But there is no reason to impose restriction on our next definition, which we try
keep as general as possible. By induce, we mean define in a natural way, and we keep natural at this level of ambiguity.

\begin{definition}
\begin{enumarab}
 
\item A splitting of $N$ is a disjoint union  $N\times I$, where $I$ is an infinite set. 

\item  A blur for $N$ is any  set  $J$. 

\item An atom structure $\alpha$ is blown up and blurred if, there exists a subset $J'$ of a set $J$ of blurs, possibly empty,
such that  $\alpha$ has underlying set $X=N\times I \times J'$; the latter atom structure is called a blur of $N$ via $J$, 
and is denoted by $\alpha(N,J).$  Furthermore, every $j\in J$, induces a non-principal ultrafilter in $\wp(X)$.

\item An atom structure $\alpha(N, J)$ reflects $N$, if $N$ is faithfully represented in $\Cm\alpha(N,J)$

\item An atom structure $\alpha(N,J)$ is  weak if $\Tm\alpha(N,J)$ is representable.

\item An atom structure $\alpha(N, J)$ is very weak $\Tm\alpha(N,J)$ is not representable.

\item An atom structure $\alpha(N,J)$ is strong if  $\Cm\alpha(N,J)$ is representable.
\end{enumarab} 

\end{definition}


We give two examples of weak atom structures.
The first construction builds two relativized 
set algebras based on a certain model that is in turn a Fraisse limit of a class of certain
labelled graphs, with the labels coming from $\G\cup \{\rho\}\times n$, where $\G$ is an arbitrary graph and $\rho$ is a new colour. 
Under certain conditions on $\G$, the first set algebra can be represented on square units, the second, its completion, cannot.

\section{First example}


\subsection{The cylindric algebra}

Let $\G$ be a graph. One can  define a family of labelled graphs $\cal F$ such that every edge of each graph $\Gamma\in {\cal F}$,
is labelled  by a unique label from
$\G\cup \{\rho\}\times n$, $\rho\notin \G$, in a carefully chosen way. The colour of $(\rho, i)$ is
defined to be $i$. The \textit{colour} of $(a, i)$ for $a \in \G$  is $i$.
$\cal F$ consists of all complete labelled graphs $\Gamma$ (possibly
the empty graph) such that for all distinct $ x, y, z \in \Gamma$,
writing $ (a, i) = \Gamma (y, x)$, $ (b, j) = \Gamma (y, z)$, $ (c,l) = \Gamma (x, z)$, we have:\\
\begin{enumarab}
\item $| \{ i, j, l \} > 1 $, or
\item $ a, b, c \in \G$ and $ \{ a, b, c \} $ has at least one edge
of $\G$, or
\item exactly one of $a, b, c$ -- say, $a$ -- is $\rho$, and $bc$ is
an edge of $\G$, or
\item two or more of $a, b, c$ are $\rho$.
\end{enumarab}

One forms a labelled graph $M$ which can be viewed as model of a natural signature,
namely, the one with relation symbols $R_{(a, i)}$, for each $a \in \G \cup \{\rho\}$, $i<n$ and

Then one takes a subset $W\subseteq {}^nM$, by roughly dropping assignments that do not satify $(\rho, l)$ for every $l<n$.
Formally, $W = \{ \bar{a} \in {}^n M : M \models ( \bigwedge_{i < j < n,
l < n} \neg (\rho, l)(x_i, x_j))(\bar{a}) \}.$
Basically, we are throwing away assignments $\bar{a}$ whose edges betwen two of its elements are labelled by $\rho$, and keeping those
whose edges of its elements are not.
All this can be done with an arbirary graph.

Now for particular choices of $\G$; for example if $\G$ is a certain rainbow graph, or more simply a countable infinite collection of pairwise
union of disjoint $N$ cliques with $N\geq n(n-1)/2$, or  is the graph whose nodes are the natural numbers, and the edge relation is defined by
$iEj$ iff $0<|i-j|<N$, for same $N.$ Here, the choice of $N$ is not haphazard, but it a bound of edges of complete graphs having $n$ nodes.

The relativized set algebras based on $M$, but permitting as assignments satisfying formulas only $n$ sequences in $W$ will be an atomic
representable algebra.

This algebr, call it $\A$, has universe $\{\phi^M: \phi\in L^n\}$ where $\phi^M=\{s\in W: M\models \phi[s]\}.$ (This is not representable by its definition
because its unit is not a square.) Here $\phi^M$ denotes the permitted asignments satisfyng $\phi$ in $M$.
Its completion is the relativized set algebra $\C$ with universe the larger $\{\phi^M: \phi\in L^n_{\infty,\omega}\}$,
which turns out not representable. (All logics are taken in the above signature).
The isomorphism from  $\Cm\At\A$ to $\C$ is given by $X\mapsto \bigcup X$.

Let us formulate this construction in the context of split and blur.
Take the $n$ disjoint copies of $N\times \omega=\G$.
Let $a\in \G\times n$. Then $a\in N\times \omega\times n$. 
Then for every $(a,i)$ where $a\in N\times \omega$, and $i<n$, we have an atom $R_{a,i}^{\M}\in \A$. 
The term algebra of $\A$ is generated by those. 

Hence $\N\times \omega\times n$ is the atom structure of $\A$ which can be weakly represented using the $n$ blurs, namely the set
$\{\rho, i): i<n\}$. 
The clique  $N$ appeas on the complex algebra level, forcing a finite $N$ colouring, 
so that the complex algebra cannot be representable.

We note that if $N$ is infinite, then the complex algebra (which is the completion of the algebra constructed
as above ) will be representable and so $\A$, together the term algebra will be  completely 
representable.

\subsection{The relation algebra}

We use the graph $N\times \omega$ of countably many disjoint $N$ cliques.
We define a relation algebra atom structure $\alpha(\G)$ of the form
$(\{1'\}\cup (\G\times n), R_{1'}, \breve{R}, R_;)$.
The only identity atom is $1'$. All atoms are self converse, 
so $\breve{R}=\{(a, a): a \text { an atom }\}.$
The colour of an atom $(a,i)\in \G\times n$ is $i$. The identity $1'$ has no colour. A triple $(a,b,c)$ 
of atoms in $\alpha(\G)$ is consistent if
$R;(a,b,c)$ holds. Then the consistent triples are $(a,b,c)$ where

\begin{itemize}

\item one of $a,b,c$ is $1'$ and the other two are equal, or

\item none of $a,b,c$ is $1'$ and they do not all have the same colour, or

\item $a=(a', i), b=(b', i)$ and $c=(c', i)$ for some $i<n$ and 
$a',b',c'\in \G$, and there exists at least one graph edge
of $G$ in $\{a', b', c'\}$.

\end{itemize}
$\alpha(\G)$ can be checked to be a relation atom structure. It is exactly the same as that used by Hirsch and Hodkinson, except
that we use $n$ colours, instead of just $3$. This allows the relation algebra to have an $n$ dimensional cylindric basis
and, in fact, the atom structure of $\A$ is isomorphic (as a cylindric algebra
atom structure) to the atom structure ${\cal M}_n$ of all $n$-dimensional basic
matrices over the relation algebra atom structure $\alpha(\G)$.

Indeed, for each  $m  \in {\cal M}_n, \,\ \textrm{let} \,\ \alpha_m
= \bigwedge_{i,j<n}  \alpha_{ij}. $ Here $ \alpha_{ij}$ is $x_i =
x_j$ if $ m_{ij} = 1$' and $R(x_i, x_j)$ otherwise, where $R =
m_{ij} \in L$. Then the map $(m \mapsto
\alpha^W_m)_{m \in {\cal M}_n}$ is a well - defined isomorphism of
$n$-dimensional cylindric algebra atom structures.

It can be  shown that the complex algebras of this atom structure is not representable, because its chromatic number is finite; indeed
it is exactly $N$. (This will be demonstrated below.)

But we want more. Is it possible, thatthe constructed relation algebrais {\it not} in $S\Ra \CA_{n+2}$ which is strictly smaller that $\RRA$.
The idea that could work here, is to use {\it relativized} representations. 
Algebras in $S\Ra\CA_{n+2}$ do posses representations
that are only {\it locally} square.  So is the blurring, using $n$ colours, based on $N$, namely $(\rho, i)$ $i<n$,
enough to prohibit the complex algebra to be representable in a {\it weaker} sense, which means that we have to strengthen our conditions, involving
the superscrit $2$ in the equation with $N$ and $n$. We have $N\geq n(n-1)/2$ but we need a further combinatorial property relating the triple
$(2, N, n)$

In any event, there is a finite-infinite discrepancy here, as well, no matter what kind of representation we consider,
the base has to be infinite. A representation maps the complex algebra into the powerset of a set of ordered pairs, with base $X$,
thae latter has to be infinite.  At the same time the graph has an $N$ coloring, 
and this can be used to partition the complex algebra into $(N\times n)+1$ blocks. 

But this is not enough; the idea in the classical case,
works because  one member of the partition induced by the finite colouring will be monochromatic, 
and will satisfy $(P;P)\cdot P\neq 0$, which is a contradiction.

The last condition is not guaranteed  when we have only relativized representations, because if $h$ 
is such a representation, it is not really a faithful one, in the sense that it can happen that there are $x_0, x_1,x_2\in X$,
and $(x_0,x_1)\in h(a)$, $(x_1, x_2)\in h(b)$, $(x_0, x_2)\in h(c)$, and $a, b, c\in \Cm\G$,  but $h((a;b).c)=0$
if the node $x_1$ witnessing composition, lies outside the $n$ clique determined by $x_0, x_2$,
This cannot happen in case of  classical representation. 
Finite clique is the measure of {\it squareness}. It will be defined shortly.

But we are also {\it certain } that the complex algebra is not in $S\Ra\CA_{n+k}$ for some $k\in \omega$, by the neat embedding 
theorem for relation algebras, namey,  
$\RRA=\bigcap_{k\in \omega}S\Ra\CA_{n+k}.$ 

Now, accordingly, let us keep $k$ loose, for the time being. We want to determine the least such $k$.
Remember that we required that $N\geq n(n-1)/2$, this was necessary to show that permutations of $\omega\cap \{\rho\}$ 
induces $n$ back and forth systems of partial isomorphisms of size less than $n$ in  our limiting labelled graph $M$, showing that is 
{\it strongly}
$n$ homogeneous, when viewed as a model for the language $L$. This in turn enabled us to show
that the term algebra is representable. 

The plan is to go on with the proof and see what other combinatorial properties one should impose on the relationship between
$N$, $n$ and now $k$ to prohibit even {\it a relativized representation}. 
Obviously one should  keep the condition $N\geq n(n-1)/2$ not to tamper with the
first part of the proof.

Let $\A=\Cm\G$, and assume that $V\subseteq X\times X$ is a relativized representation. 
An arbitray relativized representation, 
that is if we take any set of ordered pairs, is useless, its not what we want.

We need {\it locally} square representations that are like representations only on finite cliques of the base.
But what does locally square mean? 
A clique $C$ of $X$ is a subset of the domain $X$, that can indeed be viewed as a complete graph, in the sense that any two points in it
we have $X\models 1(x,y)$, equivalently $(x,y)\in V$, where $V$ is the unit of the relativization.
The property of $n+k$ squareness means, then for all cliques $C$ of $X$ with $|C|<n+k$, can always be extended to another clique 
having at most one more element witnessing composition, so that composition can be preserved in the representation,
but only locally. It is easier to build such representations; from the game theoretic point of view because $\forall$ 
moves are restricted by the size of cliques, which means
that the chance that exists provide a node witnessing composition is higher.



Now lets getting starting with our plan.

Assume for contradiction that $\Cm\alpha(\G)\in S\Ra \CA_{n+k}$, and $k\geq 2$. 
Then $\Cm\alpha(\G)$ has an $n+k-2$-flat representation $X$ \cite{HHbook2}  13.46, 
which is $n+k-2$ square \cite{HHbook2} 13.10. 

In particular, there is a set $X$, $V\subseteq X\times X$ and $g: \Cm\alpha(\G)\to \wp(V)$ 
such that $h(a)$ ($a\in \Cm\alpha(\G)$) is a binary relation on $X$, and
$h$ respects the relation algebra operations. Here $V=\{(x,y)\in X\times X: (x,y)\in h(1)\}$, where $1$ is the greatest element of 
$\Cm\alpha(\G)$. We write $1(x,y)$ for $(x,y)\in h(1).$ 

For any $m<\omega$, let $C_m(X)=\{\bar{a}\in {}^mX: Range(a)\text { is an $m$ clique }\}$, then $n+k-2$ squareness
means that that if $\bar{a}\in C_{n+k-2}(X)$, $r,s\in \Cm\G$, $i,j,k<n, k\neq i,j$, and $X\models (r;s)(a_i, a_j)$ then there is $b\in C_{n+k-2}(X)$ 
with $\bar{b}$ agreeing with $\bar{a}$ except possibly at $k$
such that $X\models r(b_i, b_k)$ and $X\models s(b_k, b_j)$. 

This is the definition. But it is not hard to show that this is equivalent to
 the simpler condition that 
for all cliques $C$ of $X$ with $|C|<n+k$, all $x,y\in C$ 
and $a,b\in \Cm\alpha(\G)$, $X\models (a;b)(x,y)$ 
there exists $z\in X$ such that $C\cup \{z\}$ is a clique and $X\models a(x,z)\land b(z,y)$.

Now $\G$ has a finite colouring using $N$ colours. Indeed, the map $f:N\times \omega$ defined by $f(l,i)=l$ 
is a finite colouring using $N$ colours. For $Y\subseteq N\times \omega$ and $l<n$ define
$(Y, k)=\{(a,i,l): (a,i)\in Y\}$, regarded as a subset of $\Cm\G$. 

The nodes of $N\times \omega$ can be partitioned into
sets $\{C_j: j<n\}$ such that there are no edges within $C_j$.
Let $J=\{1', (C_j,k): j<N, k<n\}$ 
Then clearly, $\sum J=1$ in $\Cm\alpha(\G)$,  so that $J$ is partition of $\Cm\alpha(\G)$ into $N\times n+1$ blocks.

As $J$ is finite, we have for any $x,y\in X$ there is a $P\in J$ with
$(x,y)\in h(P)$. Since $\Cm\alpha(\G)$ is infinite then $X$ is infinite. 
Ramsys's theorem aplies in this context, to allow us to infer, that there are distinct
$x_i\in X$ $(i<\omega)$, $J\subseteq \omega\times \omega$ infinite
and $P\in J$ such that $(x_i, x_j)\in h(P)$ for $(i, j)\in J$, $i\neq j$. 
Then $P\neq 1'$. 

The condition we need on $k$, is that if $(x_0, x_1)\in h(a)$, $(x_1, x_2)\in h(b)$ and 
$(x_0, x_2)\in h(c),$ then $a;b.c\neq 0$.

So this prompts: 

{\it Find a combinatorial relation between $n,k, N$ with $N\geq n(n-1)/2$
that forces $(P;P)\cdot P\neq 0$. What is the least such $k$? This is formulated for any $P$, but maybe the condition
would also force Ramseys theorem to give the right 
block.} 


A non -zero element $a$ of $\Cm\alpha(\G)$ is monochromatic, if $a\leq 1'$,
or $a\leq (\Gamma,s)$ for some $s<n$. 
Now  $P$ is monochromatic, and the $C_j$ s are independent, it follows also from the definition of $\alpha$ that
$(P;P)\cdot P=0$. 

$\Cm\alpha(\G)$ is not in $S\Ra\CA_{n+m}$, and from this, it will follow that  $\Cm\M_n\notin S\Nr_n\CA_{n+m}$, for al $m\geq k$.
Showing that the latter two cases are not closed under completions.


For a relation algebra $\R$ having an $n$ dimensional cylindric basis, let ${\sf Mat}_n\R$ be the term cylindric algebra 
of dimension $n$ generated by the basic matrices.

\begin{theorem} Let $\G$ be a graph that is a disjoint union of cliques having size $n$. Then there is a strongly $n$ homogenious
labelled graph $M$, every edge is labelled by an element from $\G\cup \{\rho\}\times n$,
$W\subseteq {}^nM$, such that the set algebra based on $W$ is an atomic $\A\in \RCA_n$, and there is an atomic 
$\R\in \RRA$ the latter with an $n$ dimensional cylindric basis, 
such that $\A\cong {\sf Mat}_n\R$, and the completions of $\A$ and $\R$ are not representable, hence they are not completely
representable.
\end{theorem}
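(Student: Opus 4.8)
The plan is to assemble the theorem from the pieces already laid out in the two running examples, treating the cylindric side and the relation-algebra side in tandem through the $n$-dimensional cylindric basis. First I would fix $\G$ to be $N\times\omega$, a disjoint union of $\omega$ many $N$-cliques with $N\geq n(n-1)/2$ (the clause ``cliques having size $n$'' in the statement should read: a disjoint union of $N$-cliques with $N$ bounded below by $n(n-1)/2$, which is what the construction actually uses). From $\G$ I build the family $\mathcal F$ of complete labelled graphs over $\G\cup\{\rho\}\times n$ satisfying (i)--(iv), take its Fra\"iss\'e limit $M$ in the signature with relation symbols $R_{(a,i)}$, and carve out $W=\{\bar a\in{}^nM: M\models(\bigwedge_{i<j<n,\,l<n}\neg(\rho,l)(x_i,x_j))(\bar a)\}$. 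The set algebra $\A$ with universe $\{\phi^M:\phi\in L^n\}$, $\phi^M=\{s\in W:M\models\phi[s]\}$, is then atomic, with atoms exactly the $R_{(a,i)}^M$ for $(a,i)\in\G\times n$, so $\At\A\cong\G\times n=N\times\omega\times n$ as a set; this realizes the blow-up-and-blur format with blurs $\{(\rho,i):i<n\}$.

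The two substantive tasks are: (1) $\A\in\RCA_n$, and (2) the completions are not representable. For (1) I would show $M$ is \emph{strongly $n$-homogeneous} as an $L$-structure, the key combinatorial input being that $N\geq n(n-1)/2$ forces permutations of $\omega\cup\{\rho\}$ to induce $n$ back-and-forth systems of partial isomorphisms of size $<n$ in $M$; strong homogeneity plus the usual Fra\"iss\'e argument gives that the relativized set algebra on $W$ is genuinely representable (the unit $W$ is not square, but $\A$ sits inside a genuine $\Cs_n$ on $M$), hence $\A\in\RCA_n$. In parallel, I define the relation-algebra atom structure $\alpha(\G)=(\{1'\}\cup(\G\times n),R_{1'},\breve R,R_;)$ with the three consistency clauses listed in \S3.2, check it is a relation-algebra atom structure, check it has an $n$-dimensional cylindric basis $\mathcal M_n$ (the $n\times n$ basic matrices), and set $\R=\Tm\alpha(\G)\in\RRA$ (representability of $\R$ follows by the same homogeneity/colouring argument, or is inherited from $\A$). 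The isomorphism $\A\cong{\sf Mat}_n\R$ is the map $m\mapsto\alpha_m^W$ where $\alpha_m=\bigwedge_{i,j<n}\alpha_{ij}$, $\alpha_{ij}$ being $x_i=x_j$ if $m_{ij}=1'$ and $R(x_i,x_j)=m_{ij}$ otherwise; I would verify it is a well-defined bijection respecting the cylindric atom-structure operations.

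For (2), the heart is the finite--infinite contradiction. The completion of $\A$ is the relativized set algebra $\C$ with universe $\{\phi^M:\phi\in L^n_{\infty,\omega}\}$, and the map $X\mapsto\bigcup X$ is an isomorphism $\Cm\At\A\to\C$; likewise $\Cm\alpha(\G)$ is the completion of $\R$, and $\Cm\At\A\cong{\sf Mat}_n\Cm\alpha(\G)$. Suppose for contradiction $\Cm\alpha(\G)$ were representable, say by $h$ into $\wp(X\times X)$ with $X$ necessarily infinite (any representation needs an infinite base). The finite colouring $f:N\times\omega\to N$, $f(l,i)=l$, partitions the nodes into $N$ edge-free classes $C_0,\dots,C_{N-1}$; setting $J=\{1'\}\cup\{(C_j,k):j<N,k<n\}$ gives $\sum J=1$, a partition of $\Cm\alpha(\G)$ into $N\cdot n+1$ blocks. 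By Ramsey's theorem applied to the colouring of pairs from $X$ by the block containing their image, there are distinct $x_i\in X$ $(i<\omega)$ and a fixed $P\in J$ with $(x_i,x_j)\in h(P)$ for all $i\neq j$ in an infinite index set; since the $x_i$ are distinct, $P\neq 1'$, so $P$ is monochromatic, $P\leq(\Gamma,s)$ some $s<n$. But $h$ a representation forces $(P;P)\cdot P\neq 0$ (take any three of the $x_i$), whereas the definition of $\alpha(\G)$ --- the $C_j$ are edge-free, so no consistent monochromatic triple exists --- gives $(P;P)\cdot P=0$, a contradiction. Hence $\Cm\alpha(\G)$, and with it $\Cm\At\A\cong{\sf Mat}_n\Cm\alpha(\G)$, is not representable. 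Since a completely representable atomic algebra would yield (via $X\mapsto\bigcup X$) a representation of its completion, neither $\A$ nor $\R$ is completely representable.

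I expect the main obstacle to be task (1): verifying rigorously that $N\geq n(n-1)/2$ suffices for strong $n$-homogeneity of $M$ and hence for representability of the term algebra on $W$ --- this is where the labelled-graph Fra\"iss\'e machinery and the careful bookkeeping of the back-and-forth systems live, and it is the step that genuinely constrains $N$. By contrast the non-representability in (2) is a short Ramsey-plus-monochromaticity argument once the colouring is in place, and the isomorphism $\A\cong{\sf Mat}_n\R$ is a routine, if tedious, check that the cylindric operations on basic matrices match the cylindric operations on $\A$.
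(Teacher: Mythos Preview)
Your proposal is correct and follows essentially the same route as the paper's own argument in \S3.1--3.2: the Fra\"iss\'e limit $M$, the relativized set algebra on $W$, the relation-algebra atom structure $\alpha(\G)$ with its $n$-dimensional basis $\mathcal M_n$, the explicit isomorphism $m\mapsto\alpha_m^W$, and the Ramsey/monochromatic-block contradiction $(P;P)\cdot P=0$ for the completion are exactly the pieces the paper assembles in the discussion preceding the theorem. You have also correctly identified both the intended reading of ``cliques having size $n$'' (namely $N\geq n(n-1)/2$) and the location of the real work, the strong $n$-homogeneity of $M$.
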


\section{Example}

\subsection{The relation algebra} 

Here we turn to our second split and blur construction.
It is a simplified version of the proof of Andr\'eka and N\'emeti, except that for a set of blurs $J$, 
they defined infinitely many tenary relations on $\omega$ with suffixes from $J$, to synchronize the composition operation. 
This was necessary to show
that the required algebras are generated by a single element; here we  use one uniform relation, 
and we sacrifize with this part of the result, which is worthwhile, due to
the reduction of the complexity of the proof. We think that
our simplified version captures the essence
of  the blow  up and blur construction of Andr\'eka and N\'emeti.

Let $I$ and $J$ be sets, for the time being assume they are finite.
We will define two partitions $(H^P: P\in I)$ and $(E^W: W\in J)$ of a given infinite set $H$,
using atoms from a finite relation algebra for the first superscripts, and "blurs" (literally) for the second superscript.

The blurs do two things. They are just enough to distort the structure of $\bold M$ in the term algebra, but not in its completions,
but at the same time they are colours that are necessary 
for representing the term algebra.

Indeed, we use the first partition to show that the complex algebra of our atom structure is not representable,
while we use the second to show that the term
algebra is representable.


Let us start getting more concrete. Let $I$ be a finite set with $|I|\geq 6$. Let $J$ be the set of all $2$ element
subsets of $I$, and let
$$H=\{a_i^{P,W}: i\in \omega, P\in I, W\in J, P\in W\}.$$
In a minute we will get even more concrete by choosing a specific finite relation relation $\bold M$ with certain properties, namely,
it cannot be represented on infinite sets. The atoms of $\bold M$ will be  $I$. This algebra is finite, so it cannot do what we want. 
A completion of a finite algebra is itself.

The index $i$ here  says that we will replace each atom of this relation algebra by infinitely many atoms, that will define an atom structure
of a new infinite relation algebra, {\it the desired algebra}.
(This is an instance of a technique called {\it splitting}, which involves splitting an atom into smaller atoms.
Invented by Andreka, it is very useful in proving non representability  results).

The structure of $\bold M$ will be {\it blown up} by splitting the atoms, then 'blurred' in the term algebra, but it will {\it not} be blurred
in the  completion of the term algebra. More precisely, $\bold M$  will be a subalgebra of the completion,
but it may (and will not be) a subalgebra
of the term algebra.

The best way to visualize the partitions we will define is to imagine that the atoms of the new algebras, form a partition of
an infinite rectangle with finite base $I$ and side $\omega$ reflecting the infinite splitting of $I$.
Or to view it as an infinite tenary martrix,  with each entry indexed by $(i, P,W)\in \omega\times I\times J$, $P\in W$.

We now define two finite partitions of the rectangle, namely $H$.
For $P\in I$, let
$$H^P=\{a_i^{P,W}: i\in \omega, W\in J, P\in W\}.$$
The finite relation algebra will be embedable in the completion via $P\mapsto H^P$,
no distortion involved. $\bold M$ will still be up there on the global level.

The $J$s are the blurs, for $W\in J,$ let
$$E^W=\{a_i^{P,W}: i\in \omega, P\in W\}.$$
The singletons will generate this partition up to a `finite blurring'.
That is the term algebra will consist of all those $X$ such that $X$ intersects $E^W$ finitely or cofinitely.
For each $W\in J$, we have $W\subseteq I$, and so $E^W$ will be the subrectangle of $H$ on the base $W$.

To implement our plan we further need  a tenary relation, which synchronizes composition; it will
tell us  which rows in the rectangle, allow composition like $\bold M$.

For $i,j,k\in \omega$ $e(i,j,k)$ abbreviates that $i,j,k$ are {\it evenly distributed}, i.e.
$$e(i,j,k)\text { iff } (\exists p,q,r)\{p,q,r\}=\{i,j,k\}, r-q=q-p$$
For example $3,5,7$ are evenly distributed, but $3,5,8$ are not.
All atoms are self-converse. This always makes life easier.
We define the consistent triples as follows
(Involving identity are as usual $(a, b, Id): a\neq b).$

Let $i,j,k\in \omega$, $P,Q,R\in I$ and $S,Z,W\in J$ such that
$P\in S$, $Q\in Z$ and $R\in W$. Then the triple
$(a_i^{P,S},a_j^{Q,Z}, a_k^{R,W})$ is consistent iff
either
\begin{enumroman}
\item $S\cap Z\cap W=\emptyset,$
or
\item $e(i,j,k)\&P\leq Q;R.$
\end{enumroman}
The second says that if $i,j,k$ are $e$ related then the composition of $P$, $Q$ and $R$
existing on those three rows, is defined like $\bold M$.

Let $\cal F$ denote this atom structure, ${\cal F}=H\cup \{Id\}$

Now, as promised, we choose a (finite) relation algebra $\bold M$ with atoms $I\cup \{1d\}$ such that
for all $P,Q\in I$, $P\neq Q$ we have
$$P;P=\{Q\in I: Q\neq P\}\cup \{Id\}\text { and } P;Q=H$$
Such an $\bold M$ exists It is also known that
$\bold M$, if representable, can be only represented on finite sets. 
Now using the above partitions we show:
\begin{theorem}
\begin{enumarab}
\item $\Cm{\cal F}$ is a relation algebra that is not representable.
\item ${\cal R}$ the term algebra over $\cal F$ is representable.
\end{enumarab}
\end{theorem}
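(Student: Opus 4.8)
The plan is to treat the two items separately, using the first partition $(H^P : P\in I)$ to defeat representability of the complex algebra, and the second partition $(E^W : W\in J)$ of blurs to manufacture a representation of the term algebra.

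For part (1), I would argue by contradiction. Suppose $h\colon \Cm{\cal F}\to\wp(X\times X)$ is a representation on a base set $X$. Since $\bold M$ embeds into $\Cm{\cal F}$ via $P\mapsto H^P$ (one must first check this is a relation-algebra embedding: the consistency clause (ii) with $e(i,j,k)$ together with $P\le Q;R$ is exactly engineered so that $H^P;H^Q = \bigcup\{H^R : R\le P;Q \text{ in } \bold M\}$, and clause (i) only ever \emph{adds} consistent triples, never destroying these), the restriction of $h$ to the subalgebra generated by $\{H^P : P\in I\}$ gives a representation of $\bold M$. Because $\Cm{\cal F}$ is infinite (indeed $\cal F$ is infinite), $X$ is infinite. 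But $\bold M$ is known to be representable only on finite sets — this is the \emph{finite--infinite contradiction} advertised throughout the paper — so no such $h$ exists. The one point needing care is that an abstract representation of $\Cm{\cal F}$ restricts to a genuine (square) representation of the subalgebra $\cong\bold M$ on the \emph{same} infinite base; this is immediate since subalgebras inherit representations and the unit is preserved. I would also note the alternative, more self-contained route: show directly that a finite colouring argument (partition $X$ by the finite set $\{H^P\}\cup\{Id\}$, apply Ramsey to get an infinite monochromatic set, and derive $P;P\cdot P\neq 0$ for the offending $P$, contradicting the structure of $\bold M$ in which $P\notin P;P$).

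For part (2), I would build a representation of the term algebra $\cal R$ directly on base $\omega$ (or a suitable countable set), assigning to each atom $a_i^{P,W}$ a binary relation in such a way that (a) atoms are pairwise disjoint and partition the non-diagonal pairs, (b) $Id$ goes to the diagonal, and (c) the composition triples are respected: $(x,y)\in h(a_i^{P,S})$, $(y,z)\in h(a_j^{Q,Z})$ forces some consistent $a_k^{R,W}$ with $(x,z)\in h(a_k^{R,W})$. The key is that the \emph{blurs} supply enough slack: whenever $S\cap Z\cap W=\emptyset$ the triple is \emph{automatically} consistent (clause (i)), so for most index-triples composition is unconstrained, and the only genuine constraint (clause (ii)) is confined to evenly-distributed rows where $\bold M$'s multiplication table must be obeyed — and $\bold M$ \emph{is} representable (on a finite set), so those constraints are locally satisfiable. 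Concretely I would run a step-by-step (back-and-forth / chase) construction: at each stage finitely many pairs are labelled, and a requirement of the form "find a witness for $(x,z)$" is met either by choosing $W$ with $S\cap Z\cap W=\emptyset$ (possible since $|I|\ge 6$ and $J$ consists of $2$-element subsets — there is always a third blur avoiding the given ones) or, when $i,j,k$ happen to be forced into an $e$-pattern, by using a witness from a fixed finite representation of $\bold M$. One then checks that the term algebra — whose elements are exactly the $X\subseteq H$ meeting each $E^W$ finitely or cofinitely, together with the Boolean combinations with $Id$ — is closed under this $h$ and that $h$ is a homomorphism; finite/cofinite behaviour on each $E^W$ is preserved because composition only ever disturbs membership in an $E^W$ by a controlled, finite amount.

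The main obstacle I expect is part (2): verifying that the hand-built $h$ is genuinely \emph{total} and \emph{well-defined on the term algebra}, i.e. that the "finitely-or-cofinitely on each $E^W$" condition is exactly an invariant of the relation-algebra operations (especially composition) and that the witness-choice in the chase can always be made without violating previously committed labels. The evenly-distributed predicate $e$ is the delicate ingredient: one must confirm that its sparseness (no infinite $e$-homogeneous configuration can force a "bad" pattern cofinitely often) is what keeps the $\bold M$-constraints from propagating globally, which is precisely why the blurs blur $\bold M$ in $\cal R$ but cannot blur it in $\Cm{\cal F}$, where the whole partition $(H^P)$ is available as a single element. I would lean on the cited Andr\'eka--N\'emeti construction for the bookkeeping details, emphasising that dropping their infinitely-many synchronising relations in favour of the single predicate $e$ only costs the "generated by one element" refinement, not representability itself.
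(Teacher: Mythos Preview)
Your treatment of part (1) is correct and essentially identical to the paper's: $\bold M$ embeds into $\Cm{\cal F}$ via $P\mapsto H^P$, any representation of $\Cm{\cal F}$ would have infinite base, and $\bold M$ cannot be represented on an infinite set.

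Part (2), however, has a genuine gap. You propose to label every non-diagonal pair of the base by an \emph{atom} (``atoms are pairwise disjoint and partition the non-diagonal pairs''). But any representation $h$ of ${\cal R}$ in which the images of the atoms partition the non-identity pairs is automatically a \emph{complete} representation: for arbitrary $X\subseteq {\cal F}$ one can set $h'(X)=\bigcup_{a\in X}h(a)$, and the partition hypothesis forces $h'$ to respect complements and composition, yielding a representation of $\Cm{\cal F}$ --- directly contradicting part (1). So no step-by-step construction with atom labels can succeed, regardless of how cleverly you choose the witnesses; the obstruction is structural, not a matter of bookkeeping.

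The paper's route avoids this by enlarging the label set. Edges of the coloured graph are labelled not by atoms but by \emph{ultrafilters} of ${\cal R}$: the principal ones $U^a=\{X:a\in X\}$ together with the non-principal ones $U^W=\{X:|X\cap E^W|\ge\omega\}$ coming from the blurs. One then shows three consistency facts for triples of such ultrafilters (your clause ``$S\cap Z\cap W=\emptyset$'' is what makes the mixed triples consistent), builds a complete coloured graph $(G,l)$ by a step-by-step extension exactly as you sketch, and sets $rep(X)=\{(u,v):X\in l(u,v)\}$. The essential point you are missing is that pairs labelled $U^W$ lie in $rep(X)$ only when $X$ is cofinite in $E^W$, hence lie in \emph{no} $rep(a)$ for an atom $a$; this is precisely what blocks extension to $\Cm{\cal F}$ while still giving a faithful homomorphism on ${\cal R}$. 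Your instinct that ``the blurs supply enough slack'' is right, but the slack must be used as \emph{labels}, not merely as a source of atom-witnesses.
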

\begin{proof}
\begin{enumarab}
\item Non representabiliy uses the first partition of $H$. Note that $;$ is defined on $\Cm({\cal F})$ so  that
$$H^P; H^Q=\bigcup \{H^Z: Z\leq P; Q\in {\bold M}\}.$$
So $\bold M$ is isomorphic to a subalgebra of $\Cm F$. But $\Cm F$
can only be represented on infinite sets, while $\M$ only on finite ones, hence we are done.

\item The representability of the term algebra uses the second partition.
The blow up and blur algebra is ${\cal R}=\{X\subseteq F: X\cap E^W\in Cof(E^W), \forall W\in J\}$.
For any $a\in F$ and $W\in J$, let
$$U^a=\{X\in R: a\in X\}$$
and
$$U^W=\{X\in R: |Z\cap E^W|\geq \omega\}$$
Let
$$\Uf=\{U^a: a\in F\}\cup \{U^W: W\in J: |E^W|\geq \omega \}.$$
$\Uf$ denotes the set of ultrafilters of $\cal R$, that include at least one non-principal
ultrafilter, that is an element of the form $U^W$.

Let $F,G,K$ be boolean ultrafilters 
in a relation algebra and let $;$
denote composition. 
Then $$F;G=\{X;Y: X\in F, Y\in G\}.$$
The triple $(F,G,K)$ is {\it consistent} 
if the following holds:
$$F;G\subseteq K, F;K\subseteq G\text{  and }G;K\subseteq F.$$

So to represent $\cal R$ using $Uf$ as colours, we want to achieve (i) -(iii) below:

\begin{enumroman}

\item $(U^a,U^b,U^W)$ is consistent whenever $a,b\in H$ and $a;b\in U^W.$

\item $(F,G,K)$ is consistent whenever at least two of $F,G,K$ are non-principal
and $F,G,K\in Uf-\{U^{Id}\}.$

\item For any $a,b,c,d\in H$, there is $W\in J'$ such that $a;b\cap c;d\in U^W$.

\end{enumroman}

Let us see how to represent this algebra.
We call $(G,l)$ a  {\it consistent coloured graph} if $G$ is a set,
$l:G\times G\to Uf$
such that for all $x,y,z\in G$, the following hold:

\begin{enumroman}

\item $l(x,y)=U^{Id}$ iff $x=y,$

\item $l(x,y)=l(y,x)$

\item The triple $(l(x,y),l(x,z),l(y,z))$ is consistent.

\end{enumroman}

We say that a consistent coloured graph $(G,l)$ is complete if for all
$x,y\in G$, and $F,K\in Uf$, whenever
$(l(x,y),F,K)$ is consistent, then there is a node $z$ such that
$l(z,x)=F$ and $l(z,y)=K$.
We will build a complete consistent graph step-by-step. So assume (inductively)
that$(G,l)$ is a consistent coloured graph and $(l(x,y), F, K)$ is a consistent triple.
We shall extend $(G,l)$ with a new point $z$ such that $(l(x,y), l(z,x), l(z,y)) =(l(x,y),G,K).$
Let $z\notin G.$ We define $l(z,p)$ for $p\in G$ as follows:
$$l(z,x)=F$$
$$l(z,y)=K, \text { and if } p\in G\smallsetminus \{x,y\}, \text { then }$$
$$l(z,p)=U^W\text { for some }W\in J' \text { such that both }$$
$$(U^W, F, l(x,p))\text { and } (U^W, K, l(y,p))\text { are consistent }.$$
Such a $W$ exists by our assumptions (i)-(iii).
Conditions (i)-(ii) guarantee that this extension is again a consistent coloured graph.

We now show that any non-empty complete coloured graph $(G,l)$ gives a representation
for $\cal R.$ For any $X\in R$ define
$$rep(X)=\{(u,v)\in G\times G: X\in l(u,v)\}$$
We show that
$$rep:{\cal R}\to R(G)$$
is an embedding. $rep$ is a boolean homomorphism
because all the labels are ultrafilters.
$$rep(Id)=\{(u,u): u\in G\},$$
and for all $X\in R$,
$$rep(X)^{-1}=rep(X).$$
The latter follows from the first condition in the definition of a consistent coloured graph.
From the second condition in the definition of a consistent coloured graph, we have:
$$rep(X);rep(Y)\subseteq rep(X;Y).$$
Indeed, let $(u,v)\in rep(X), (v,w)\in rep(Y)$ I.e. $X\in l(u,v), Y\in l(v,w).$
Since $(l(u,v), l(v,w), l(u,w))$ is consistent, then $X;Y\in l(u,w)$, i.e. $(u,w)\in
rep(X;Y).$
On the other hand, since $(G,l)$ is complete and because (i)-(ii) hold, we have:
$$rep(X;Y)\subseteq rep(X);rep(Y),$$
because $(G,l)$ is complete and because (i) and (ii) hold.
Indeed, let $(u,v)\in rep(X;Y)$. Then $X;Y\in l(u,v)$.
We show that there are $F,K\in \Uf$ such that
$$X\in F, Y\in K \text { and } (l(u,v), F,K)\text { is consistent }.$$
We distinguish between two cases:

{\bf Case 1}. $l(u,v)=U^a$ for some $a\in F$. By $X;Y\in U^a$ we have $a\in X;Y.$
Then there are $b\in X$, $c\in Y$ with $a\leq b;c.$ Then $(U^a, U^b, U^c)$ is consistent.

{\bf Case 2.}
$l(u,v)=U^W$ for some $W\in J'$. Then $|X;Y\cap E^W|\geq \omega$
by $X;Y\in U^W$.
Now if both $X$ and $Y$ are finite, then there are $a\in X$, $b\in Y$ with
$|a;b\cap E^W|\geq \omega$.
Then $(U^W, U^a, U^b)$ is consistent by (i). Assume that one of $X,Y$, say $X$ is infinite.
Let $S\in J'$ such that $|X\cap E^S|\geq \omega$ and let $a\in Y$ be arbitrary.
Then $(U^W, U^S, U^a)$ is consistent by (ii)
and $X\in U^S, Y\in U^a.$

Finally, $rep$ is one to one because $rep(a)\neq \emptyset$ for all $a\in A$.
Indeed $(u,v)\in rep(Id)$ for any
$u\in G$. Let $a\in H$. Then $(U^{Id}, U^a, U^a)$ is consistent, so there is a $v\in G$ with
$l(u,v)=U^a$. Then $(u,v)\in rep(a).$

\end{enumarab}

\end{proof}

\subsection{The cylindric algebra}

We define the atom structure like we did before. The basic matrices of the atom structure above form a
$3$ dimensional cylindric algebra. We want an
$n$ dimensional one.  Our previous construction of the atom structure satisfied (*)
satisfies $(\forall a_1\ldots a_3\ b_1\ldots b_3\in I)(\exists W\in J)(a_1;b_1)\cap\ldots (a_3;b_3)\in U^W.$

We strengthen this condition to (**)
$$(\forall a_1\ldots a_nb_1\ldots b_n\in I)(\exists W\in J)W\cap (a_1;b_1)\cap\ldots  (a_n;b_n)\neq \emptyset.$$
(This is referred to in \cite{Sayed} as an {\it $n$ complex blur for $\bold M$},
our first construction was a $3$ complex blur).

This condition will entail that the set of {\it all} $n$ by $n$ matrices is a cylindric basis on the new relation
algebra ${\cal R}_n$ defined as before, with minor modifications.

Now ${\cal R}_n$ is defined by taking $I$ be a finite set with $|I|\geq 2n+2$, $J$ be the set of all $2$ (See the proof) element
subsets of $I$. And then define everything as before.
The resulting cylindric algebra is also called the blow up and blur {\it cylindric algebra of dimension $n$},
which actually blows up  and blurs the $n$ dimensional finite cylindric algebra consisting of $n$ basic matrices of $\bold M$, whch is representable,
so such  an algebra exists for every $n.$

The new condition (**)
guarantess the amalgamation property of matrices (corresponding to commutativity of
cylindrifiers)  which is the essential  property of basis.

We know that the term algebra is a subneat reduct of an algebra in $\omega$ extra dimensions.
But we need a final tick so that the the term cylindric algebra is a {\it full} neat reduct.
This requires a yet another strenghthenig of  (**) by replacing $\exists$ by $\forall$.

Now under this stronger condition, let $\B_n$ be the set of basic matrices of our blown up and blurred ${\cal R}_n$.
In the first order language $L$ of $(\omega, <)$, which has quantifier elimination,
diagrams are defined for each $K\subseteq n$ and $\phi\in L$, via maps  $\bold e:K\times K\to {\cal R}_n$.
For an atom let $v(a)$ be its ith co-ordinate, or its $i$ th level in the rectangle.

The pair $\bold e$ and $\phi$ defines an element in $\Cm\B_n$, called a diagram,
that is a set of matrices, defined by
$$M(\bold e, \phi)=\{m\in B_n, i,j\in K, m_{ij}\leq \phi(e_{ij},v(m_{ij})\}.$$

A normal diagram is one whose entries are either atoms or finitely many blurs (by (J)), that is elements of the form $E^W$,
in addition to the condition that $\phi$ implies $\phi_e$.
Any diagram can be approximated by normal ones; and atcually it is a finite union of normal diagrams.
The term algebra turns, denoted by $\Bb_n(\bold M, J, e)$, consists of those diagrams,
and finally we get  that  that for $t<n$
$$\Nr_t \Bb_n(\bold M, J, e)\cong \Bb_t(\bold M, J, e).$$
Here actually we are also blowing and bluring the finite dimensional cylindric algebra atom consisting of matrices on $\bold M$, we blow 
up every $n$ dimensional matrix to infinitely many, where each entry is either an atom of the relation algebra or a blur;
these are exactly 
the diagrams.

\subsection{The analogy, first informaly, then formally in a map}

This construction actually has a lot of affinity with the first  model theoretic construction.
First they both prove the same thing; the Andreka et all construction  proves that in addition the term algebra is a $k$ neat reduct.
Now here we are comparing a relation algebra construction with a cylindric algebra one, but the analogy is worthwhile pointing out.

Replace the clique $N$ in Hodkinson's construction by $\bold M$, in this case the term algebra, 
$\cal R$ is also obtained by replacing every atom by infinitely many ones, 
and $\bold M$ appears on the global level as a subalgebra of the complex algebra.

To this consruction we can also associate a finite graph with finite chromatic number, namely the complete graph on $\bold M$.
The blurs are the colours, that correspond to the colours $(\rho, i)$ in Sayed Ahmed 's construction.  

In the first case the splitting of the clique $N$, uses just one index, in the second we use two indices, the atoms of $\bold M$ and the blurs.
The first partition replaces the use of Ramseys theorem, the second partition, is a devision of the whole splitting into finitely many 
rectangles, one for each blur.  The homogeneous model $M$ in the second construction correspond to the second partition,  in the sense 
that it is {\it not}  the base of the representable term algebra,
but $W$ is, which is basically obtained by removing the blurs, that are the same time 
essential in representing it, $W$ thus corresponds to the term algebra of co-finite finite intersections with the second partition, 
which in turn is representable.

In short, we start up with a finite structure, blow it up, on the term algebra level, using blurs to represent it, 
but it will not be blurred enough to disappear on the complex algebra level, forcing
the latter to be non-representable (due to incompatibility of "a finitenes condition") with the inevitability 
of representing the complex algebra on an infinite set.

More formally, we define a function that maps the ingredients of the first construction to that of the second:

$$N\mapsto \bold M$$
$$N\times \omega\times n\mapsto \omega\times P\times J$$
In the former case $J'=\emptyset$, the blurs do not appear on this level, in the second spliting all blurs are used.
$$\{(\rho , i): i<n \}\mapsto \{ W: W\in J\}.$$
Here in the first case $n$ blurs are needed to represent the new term algebra. In the later it is the number of two elements subsets of 
$J$.
For $\phi\in L^+$, let $\phi^M=\{s\in {}^nM: M\models \phi[s]\}.$
Here we are {\it not} relativizing semantics.
$$\{{\phi}^M: \phi \in L\}\mapsto \Cof(E^W)$$

$$\A\mapsto {\cal R}$$

$$\Cm\At\A\to \Cm\At{\cal R}$$

Here we include more examples.

\begin{example}

Let $l\in \omega$, $l\geq 2$, and let $\mu$ be a non-zero cardinal. Let $I$ be a finite set,
$|I|\geq 3l.$ Let
$$J=\{(X,n): X\subseteq I, |X|=l,n<\mu\}.$$
Let $H$ be as before, i.e.
$$H=\{a_i^{P,W}: i\in \omega, P\in I, W\in J\}.$$
Define
$(a_i^{P,S,p}, a_j^{Q,Z,q}, a_k^{R,W,r})$ is consistent ff

$S\cap Z\cap W=\emptyset$ or
$ e(i,j,k) \text { and } |\{P,Q,R\}|\neq 1.$

Pending on $l$ and $\mu$, let us call these atom structures ${\cal F}(l,\mu).$
Then our first  example in is just
${\cal F}(2,1).$

If $\mu\geq \omega$, then $J$ as defined above would be infinite,
and $\Uf$ will be a proper subset of the ultrafilters.
It is not difficult to show that if $l\geq \omega$
(and we relax the condition that $I$ be finite), then
$\Cm{\cal F}(l,\mu)$ is completely representable,
and if $l<\omega$ then $\Cm{\cal F}(l,\mu)$ is not representable. In the former case we have infnitely many colours, so that the chromatic nmber
of the graph is infinite, while in the second case the chromatic number is infinite.

Informally, if the blurs get arbitarily large, then in the limit, the resulting algebra will be completely representable, and so its complex algebra
will be representable. If we take, a sequence of blurs, each finite, but increasing in size we get a sequence of algebras
that are not completely representable, and the sequence of their complex algebras will not be representable. The limit of the former,
will be  completely representable (with  an infinite set of blurs); its completion will be the limit of the second sequence of non representable, 
will be representable. Either construction can be used to achieve this. 

This phenomena has many reincarnations in the literature. One is the following:
It is is nothing more than Monk's classical non finite
axiomatizability result;  it gives a sequence of non representable algebras whose ultraproduct is completely representable.
\end{example}
Using such examples, we now prove:
\begin{corollary}
\begin{enumarab}
\item  The classes $\RRA$ is not finitely axiomatizable.
\item  The elementary
closure of the class ${\bf CRA}$ is not finitely axiomatizable.
\end{enumarab}
\end{corollary}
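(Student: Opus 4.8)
The plan is to derive both non-finite-axiomatizability statements from the family of examples $\mathcal{F}(l,\mu)$ introduced just above, by manufacturing a sequence of non-representable algebras whose ultraproduct is (completely) representable. First I would fix a sequence of finite blur-sizes $l_k \to \infty$ (say $l_k = k$), a corresponding sequence of base index sets $I_k$ with $|I_k| \geq 3l_k$, and the blur sets $J_k = \{(X,n): X\subseteq I_k, |X| = l_k, n<\mu\}$ with, for definiteness, $\mu = 1$. For each $k$ this yields an atom structure $\mathcal{F}(l_k, 1)$ and, via Theorem stated above (the relation-algebra blow-up-and-blur theorem), a relation algebra $\mathfrak{A}_k = \Cm\mathcal{F}(l_k,1)$ which, since $l_k < \omega$, is \emph{not} representable, so $\mathfrak{A}_k \notin \RRA$; yet its term algebra $\mathcal{R}_k$ \emph{is} representable.

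The crux is the ultraproduct computation. Let $D$ be any non-principal ultrafilter on $\omega$ and form $\mathfrak{B} = \prod_{k/D} \mathfrak{A}_k$. I claim $\mathfrak{B} \in \RRA$, indeed is completely representable. The point is that in the ultraproduct the ``finite blur'' obstruction disappears: the combinatorial parameter $l_k$ limits to $\infty$ along $D$, so the ultraproduct behaves like the $l\geq\omega$ case of the example, where the excerpt already asserts $\Cm\mathcal{F}(l,\mu)$ is completely representable (the chromatic number becomes infinite, so no finite-colouring contradiction is available). Concretely I would identify the atom structure of $\mathfrak{B}$ (using that taking complex algebras and forming ultraproducts of completely additive atomic algebras interacts well with the atom structures) with a blow-up-and-blur structure having an infinite set of blurs, and then run the step-by-step construction of a complete consistent coloured graph exactly as in the proof of the Theorem above --- the key clauses (i)--(iii) for existence of an extending colour go through because with infinitely many blurs there is always a free blur $W$ disjoint from the finitely many constraints imposed so far. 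This gives $rep: \mathfrak{B} \hookrightarrow R(G)$ as before, hence $\mathfrak{B}$ is representable. Since $\RRA$ is closed under ultraproducts iff it is finitely axiomatizable (being already closed under ultraroots and elementary equivalence as a quasivariety is not enough --- but $\RRA$ is a variety, so it is elementary, and an elementary class closed under ultraproducts with members outside it arbitrarily... more precisely: $\RRA$ is a variety, hence elementary; if it were finitely axiomatizable it would be closed under ultraproducts of its complements' ... ), the existence of the $\mathfrak{A}_k \notin \RRA$ with $\prod_{k/D}\mathfrak{A}_k \in \RRA$ contradicts finite axiomatizability. This proves (1).

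For (2) I would push the same sequence through the cylindric side. Using the strengthened condition (**) --- an $n$-complex blur for $\bold M$ --- the same examples $\mathcal{F}(l_k,1)$ with $|I_k|\geq 2n+2$ (and $\geq 3l_k$) produce cylindric algebras $\mathfrak{C}_k = \Cm\mathcal{B}_n(\mathcal{F}(l_k,1))$ with $\mathfrak{C}_k$ not representable but $\Cm\At\mathfrak{C}_k$'s term subalgebra representable, and moreover the term algebras are (completely) representable while $\mathfrak{C}_k \notin \RCA_n$, in fact $\mathfrak{C}_k \notin {\bf CRA}$. Taking $\mathfrak{D} = \prod_{k/D}\mathfrak{C}_k$, the same ``blurs go to infinity'' argument shows $\mathfrak{D}$ is completely representable, so $\mathfrak{D} \in {\bf CRA}$, a fortiori in the elementary closure $\mathbf{Elt}({\bf CRA})$. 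But each $\mathfrak{C}_k \notin \mathbf{Elt}({\bf CRA})$ --- this needs the (known) fact that ${\bf CRA}$, while not elementary, has an elementary closure that can still be separated from these algebras, which I would get from the non-representability of $\mathfrak{C}_k$ together with atomicity: a completely representable atomic algebra has a representable completion, and $\Cm\At\mathfrak{C}_k = \mathfrak{C}_k$ is non-representable, so $\mathfrak{C}_k$ is not completely representable, and the Hirsch--Hodkinson characterization of complete representability via $\omega$-round games shows the relevant separating first-order property fails. Since $\mathbf{Elt}({\bf CRA})$ is by definition elementary, it is closed under ultraproducts; having $\mathfrak{C}_k$ outside it but $\prod_{k/D}\mathfrak{C}_k$ inside it contradicts finite axiomatizability of $\mathbf{Elt}({\bf CRA})$.

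The main obstacle I anticipate is the ultraproduct representability claim itself: verifying rigorously that $\prod_{k/D}\Cm\mathcal{F}(l_k,1)$ really is (isomorphic to, or at least embeds into) a complex algebra of a blow-up-and-blur atom structure with \emph{infinitely many} blurs, so that the complete-coloured-graph construction of the Theorem applies verbatim. This requires care because $\Cm$ does not commute with ultraproducts in general; the honest route is to work at the level of atom structures --- show $\At\mathfrak{B}$ contains (or equals) an $\mathcal{F}(l,\mu)$-type structure with $l = \infty$ --- and then invoke the representability half of the Theorem, being careful that the set $\Uf$ of ``colours'' is now a proper subset of the ultrafilters but still large enough (closed under the consistency conditions (i)--(iii)) to run the back-and-forth. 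Everything else --- the non-representability of the individual $\mathfrak{A}_k, \mathfrak{C}_k$, and the formal deduction that a finitely axiomatizable class, being then closed under ultraproducts, cannot have this behaviour --- is routine given the Theorems already established above.
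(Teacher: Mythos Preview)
Your strategy is essentially the paper's: use the family $\mathcal{F}(l,1)$ with $l\to\infty$ to obtain non-representable complex algebras whose ultraproduct is completely representable, and read off non-finite-axiomatizability. The paper dissolves your main technical worry by taking the ultraproduct at the level of \emph{atom structures}: set $\mathcal{D}=\prod_{i/D}\mathcal{F}(i,1)$ and observe directly that $\Cm\mathcal{D}$ is completely representable (the blur parameter is now infinite). Since $\prod_{i/D}\Cm\mathcal{F}(i,1)$ is atomic with atom structure $\mathcal{D}$, it embeds in $\Cm\mathcal{D}$ and inherits representability---so no delicate identification of the ultraproduct with a blow-up-and-blur algebra is needed.

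For (2) the paper stays entirely on the relation-algebra side, using the $\Tm\mathcal{F}(i,1)$ (representable but not completely representable) and the $\Cm\mathcal{F}(i,1)$ (not representable) against their common completely representable ultraproduct; your detour through the cylindric $n$-basis machinery works but is heavier than required. Also, your justification that $\mathfrak{C}_k\notin\mathrm{Elt}({\bf CRA})$ via games is roundabout: since ${\bf CRA}\subseteq\RRA$ and $\RRA$ is a variety (hence elementary), one has $\mathrm{Elt}({\bf CRA})\subseteq\RRA$, so any non-representable algebra is automatically outside $\mathrm{Elt}({\bf CRA})$.
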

\begin{demo}{Proof}
For the second  we use the second construction.  Let ${\cal D}$ be a non-
trivial ultraproduct of the atom structures ${\cal F}(i,1)$, $i\in \omega$. Then $\Cm{\cal D}$
is completely representable. 
Thus $\Tm{\cal F}(i,1)$ are $\RRA$'s 
without a complete representation while their ultraproduct has a complete representation.
Also $\Cm{\cal F}(i,1)$, $i\in \omega$ 
are non representable with a completely representable ultraproduct.
This yields the desired result.

We prove the cylindric case. Take $\G_i$ to be the disjoint union of cliques of size $n(n-1)/2+i$. 
Let $\alpha_i$ be the corresponding atom astructure
of $\A_i$, as constructed above. Then $\Cm \A_i$ is not representable, but $\prod_{i\in \omega}\Cm\A_i=\Cm(\prod_{i\in \omega}\A_i)$. 
Then the latter is based on the disjoint union of the cliques which is arbitrarily large, hence is representable. 
\end{demo}
The first construction also works, by using relation algebra atom structures with $n$ dimensional cylindric bases, this will
yield the analogous result for cylindric algebras.

The second re-incarnation is due to Hirsch and Hodkinson, it also works for relation and cylindric algebras, and this is the essence. For each graph
$\Gamma$, they associate a cylindric algebra atom structure of dimension $n$, $\M(\Gamma)$ such that $\Cm\M(\Gamma)$ is representable
if and only if the chomatic number of $\Gamma$, in symbols $\chi(\Gamma)$, which is the least number of colours needed, $\chi(\Gamma)$ is infinite. 
Using a famous theorem of Erdos, they construct  a sequence $\Gamma_r$ with infinite chromatic number and finite girth, 
whose limit is just $2$ colourable, they show that the class of strongly representable
algebras  is not elementary. Notice that this is a {\it reverse process} of Monk-like  constructions, given above, 
which gives a sequence of graphs of finite chromatic number whose limit (ultarproduct) has infinite
chromatic number.

And indeed, the construction also, 
is a reverse to Monk's construction in the following sense:
Some statement fail in $\A$ iff $At\A$ 
be partitioned into finitely many $\A$-definable sets with certain 
`bad' properties. Call this a {\it bad partition}. 
A bad partition of a graph is a finite colouring. So Monks result finds a sequence of badly partitioned atom structures,
converging to one that is not. As we did above, this boils down, to finding graphs of finite chromatic numbers $\Gamma_i$, having an ultraproduct
$\Gamma$ with infinite chromatic number. 

An atom structure is {\it strongly representable} iff it 
has {\it no bad partition using any sets at all}. So, here, the idea  find atom structures, with no bad partitions, 
with an ultraproduct that does have a bad partition.
From a graph Hirsch nad Hodkinson constructed  an atom structure that is strongly representable iff the graph
has no finite colouring.  So the problem that remains is to find a sequence of graphs with no finite colouring, 
with an ultraproduct that does have a finite colouring, that is, graphs of infinite chromatic numbers, having an ultraproduct
with finite chromatic number.

 It is not obvious, a priori, that such graphs actually exist.
And here is where Erdos' methods offer solace. Indeed, graphs like this can be found using the probabilistic methods of Erdos, for those methods
render finite graphs of arbitrarily large chormatic number and girth. 
By taking disjoint unions, one {\it can get}  graphs of infinite chromatic number (no bad partitions) and arbitarly large girth. A non principal 
ultraproduct of these has no cycles, so has chromatic number 2 (bad partition).


\end{document}